\documentclass[12pt]{article}
\usepackage{amsthm,amsfonts,amssymb,amsmath}
\usepackage{stmaryrd}
\usepackage{cite,hyperref}
\usepackage{epsfig}
\usepackage{url}
\usepackage{xcolor,tikz}
\usetikzlibrary{positioning, arrows.meta,calc}
\usetikzlibrary{matrix}
\usepackage{multicol,graphicx}
\usepackage{fullpage}
\usepackage{bbm}
\usetikzlibrary{decorations}
\usepackage{svg}

\usepackage[shortlabels]{enumitem}

\numberwithin{equation}{section}

\newtheorem{thm}[equation]{Theorem}

\newtheorem{lem}[equation]{Lemma}

\theoremstyle{definition}

\newtheorem*{ai}{Acknowledgements and AI Declaration}

\theoremstyle{remark}

\title{Orthogonal Pairs in Maps from the Sphere to the Circle}
\author{Jonathan A. Noel\thanks{Department of Mathematics and Statistics, University of Victoria, Victoria, B.C., Canada. E-mail: {\tt noelj@uvic.ca}. Research supported by NSERC Discovery Grant RGPIN-2021-02460.}}

\DeclareTextCompositeCommand{\v}{OT1}{l}{l\nobreak\hspace{-.1em}'}
\DeclareTextCompositeCommand{\v}{OT1}{t}{t\nobreak\hspace{-.1em}'\nobreak\hspace{-.15em}}

\begin{document}

\maketitle

\begin{abstract}
We prove that, for any $f:S^2\to S^1$ and any $\varepsilon>0$, there exist orthogonal vectors $x,y\in S^2$ such that the length of the shortest arc between $f(x)$ and $f(y)$ is at most $\pi/2 +\varepsilon$. This proves a conjecture of Ghebleh from 2007 that the circular chromatic number of the real orthogonality graph is equal to four. 
\end{abstract}

\section{Introduction}

For $n\geq1$, let $S^n$ denote the $n$-dimensional unit sphere, i.e.
\[S^n:=\left\{x\in \mathbb{R}^{n+1}: \|x\|_2=1\right\}.\]
Our main result is the following. We remark that, in this theorem, the function $f$ is arbitrary, with no assumptions on continuity, measurability, or anything else. 

\begin{thm}
\label{th:spheres}
Let $f:S^2\to S^1$. Then, for any $\varepsilon>0$, there exist orthogonal $x$ and $y$  in $S^2$ such that the length of the shortest arc between $f(x)$ and $f(y)$ is at most $\pi/2 +\varepsilon$.
\end{thm}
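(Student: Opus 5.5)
We argue by contradiction. Fix $\varepsilon>0$ and suppose $f:S^2\to S^1$ satisfies $d(f(x),f(y))>\pi/2+\varepsilon$ for every orthogonal pair $x\perp y$, where $d$ is arc length on $S^1$. Since $f$ is arbitrary, we cannot use any continuity. Instead we extract combinatorial constraints from finite orthogonality configurations and aim for a parity or winding contradiction. It will also be convenient to work, via a de Bruijn--Erd\H{o}s type compactness argument for circular colourings, with a single finite set of vectors. So the goal is a finite orthogonality graph that admits no such $f$.

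\emph{Step 1 (great circles are confined to an arc).} Let $z\in S^2$ and let $C_z=z^\perp\cap S^2$ be the great circle of vectors orthogonal to $z$. Every $x\in C_z$ satisfies $d(f(x),f(z))>\pi/2+\varepsilon$. Hence $f(C_z)$ lies in the closed arc $I_z$ of length less than $\pi-2\varepsilon$ antipodal to $f(z)$.

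Now take $x\in C_z$ and let $x'$ be its rotation by $\pi/2$ within $C_z$, so $x\perp x'$. Both $f(x)$ and $f(x')$ lie in $I_z$ and are more than $\pi/2+\varepsilon$ apart. So one of them lies in the initial segment $L_z$ of $I_z$ of length less than $\pi/2-3\varepsilon$, and the other lies in the final segment $R_z$. These two segments are disjoint and are separated by a gap of length greater than $\pi/2+\varepsilon$. Writing $x_\theta$ for the point of $C_z$ at angle $\theta$, we obtain a side function $s_z:C_z\to\{L,R\}$ with $s_z(x_{\theta+\pi/2})\neq s_z(x_\theta)$ for every $\theta$. Consequently $s_z(x_{\theta+\pi})=s_z(x_\theta)$.

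\emph{Step 2 (compatibility between different poles).} A vector $x$ lies on every great circle $C_z$ with $z\perp x$. The value $f(x)$ is a single point, so the side labels $s_z(x)$ for different $z$ are tied together by how the arcs $I_z$ sit relative to one another. I would study an orthonormal frame $\{x,y,z\}$: the three colours are pairwise more than $\pi/2+\varepsilon$ apart, so the arcs $I_x,I_y,I_z$ and their $L/R$ decompositions are determined up to rotation and reflection by the cyclic order of $f(x),f(y),f(z)$ on $S^1$. This should give a local rule of the following form. Suppose $z,z'$ are nearby poles, both orthogonal to $x$. Then $s_z(x)$ and $s_{z'}(x)$ are related in a fixed way, either equal or opposite according to an orientation that can be read off from the frames.

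\emph{Step 3 (the key lemma and the contradiction).} The heart of the argument, and the step I expect to be the main obstacle, is a rigidity statement. For any two vectors $x,y$ at sufficiently small angle (depending on $\varepsilon$), some finite gadget of orthonormal frames containing $x$ and $y$ forces $f(x)$ and $f(y)$ to lie on the same side, and in fact to be close. In effect, the gadget forces $f$ to be approximately continuous along a chain of small steps. My first attempt would use the Godsil--Zaks / Kochen--Specker style constructions that show $\chi=4$ for this graph, adapted so that the slack $\varepsilon$ propagates: each frame pins the three colours to within $O(\varepsilon)$ of a rigid configuration, namely three points spaced roughly $2\pi/3$ apart, with one pair about $\pi/2+\varepsilon$ apart.

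Granting this lemma, walk around $C_z$ in small steps from $x_0$ to $x_{\pi/2}$. The side label cannot change at any step, yet Step 1 says it must flip between the two endpoints. This is the desired contradiction, and compactness then reduces everything to a finite configuration. If direct rigidity is too hard, a fallback is to replace "same side" by a $\mathbb{Z}_2$-valued cocycle on small steps. One would then show that its sum around a quarter of a great circle is forced to be both $0$ and $1$, by comparing two homotopic paths of frames in $SO(3)$ and using the fact that a small loop of frames carries trivial holonomy.
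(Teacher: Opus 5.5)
Your Step~1 is correct, up to a slip: the gap between $L_z$ and $R_z$ has length $4\varepsilon$, not more than $\pi/2+\varepsilon$. After that, however, everything rests on the rigidity lemma of Step~3, for which you give no mechanism, and the heuristics around it are wrong.

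\begin{itemize}
\item A frame does not pin its colours near a rigid configuration. Three points pairwise more than $\pi/2+\varepsilon$ apart may have any gaps in $(\pi/2+\varepsilon,\pi-2\varepsilon)$ summing to $2\pi$, which is a two-parameter family. Also, ``spaced roughly $2\pi/3$ apart, with one pair about $\pi/2+\varepsilon$ apart'' is self-contradictory.
\item The ``local rule'' in Step~2 relating $s_z(x)$ and $s_{z'}(x)$ for nearby $z,z'$ presupposes that $I_z$ and $I_{z'}$ are related, i.e.\ that $f(z)\approx f(z')$. That is the continuity you are trying to prove. The holonomy fallback presupposes the same local control.
\item Approximate continuity is exactly what an arbitrary $f$ lacks. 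Send the Godsil--Zaks $4$-colouring to four equally spaced points of $S^1$: every orthogonal pair is at distance at least $\pi/2$, yet arbitrarily close points get colours at least $\pi/2$ apart. So any gadget forcing $f(x)$ and $f(y)$ within $4\varepsilon$ of each other (which your final step needs) must exploit the slack $\varepsilon$, and you do not indicate how.
\end{itemize}

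For continuous $f$ the theorem is the easy connectedness argument you give at the end. Your key lemma therefore carries the whole difficulty of the problem.

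The missing idea is to use many poles of nearly the same colour instead of a single pole. For one $z$, your bipartition of $C_z$ can never give a contradiction, because orthogonality restricted to a great circle is a disjoint union of $4$-cycles. Instead, partition $S^1$ into finitely many arcs of length less than $2\varepsilon$ (the paper instead discretizes to a $(p,q)$-colouring). A finite union of nowhere dense sets is nowhere dense, so some colour class $X$ is dense in an open cap. Your Step~1 argument, applied to all of $X$ at once, then shows that the colours on $N(X)$ lie in two sub-arcs, neither containing both ends of an edge. It therefore suffices that $N(X)$ contains an odd cycle, which is exactly Theorem~\ref{th:nbhd}. The paper builds an explicit odd closed walk $b_0,\dots,b_{n-1}$ near the equator ($n=4k+3$) with $b_i\perp b_{i+1}$ and $b_i\perp a_i$ for points $a_i$ in the cap. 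It then uses the Implicit Function Theorem to perturb the $b_i$ so that the $a_i$ may be replaced by arbitrary nearby points of $X$. No regularity of $f$ is ever used.
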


This work is motivated by a conjecture of Ghebleh~\cite{Ghebleh07} in graph theory. Let $\mathcal{O}$ be the graph with vertex set $S^2$ where $x$ and $y$ are adjacent if they are orthogonal.\footnote{In~\cite{Ghebleh07,DeVos+09+}, the vertex set of $\mathcal{O}$ is taken to be the set of lines through the origin in $\mathbb{R}^3$; however, for our purposes, it is equivalent to deal with $S^2$.} The Kochen--Specker Theorem~\cite{KochenSpecker67} in quantum mechanics implies that there is no independent set in $\mathcal{O}$ that meets every triangle. As a consequence, the chromatic number of $\mathcal{O}$ is at least four. In fact, it is exactly equal to four. Godsil and Zaks~\cite{GodsilZaks88} exhibited an explicit $4$-colouring obtained by placing a regular octahedron at the origin and assigning a colour to each vector $x\in S^2$ based on the pair of opposite faces the line spanned by $x$ passes through.\footnote{One needs to be a little careful about the boundaries between faces, but this can be done.} 

Given a non-empty graph $G$, the \emph{circular chromatic number} of $G$, denoted $\chi_c(G)$, is the infimum over all $r\geq1$ such that there exists a mapping $f:V(G)\to S^1$ such that if $uv\in E(G)$, then the length of the shortest arc between $f(u)$ and $f(v)$ is at least $2\pi/r$. The circular chromatic number was first studied, in a different form and under a different name, by Vince~\cite{Vince88}. The definition given here is closer to the one given by Zhu~\cite[Section~2]{Zhu92}; see also the survey~\cite{Zhu01}.  

A fundamental result on the circular chromatic number is that $\left\lceil\chi_c(G)\right\rceil=\chi(G)$ for every non-empty graph $G$ of finite chromatic number. This was proven for finite graphs by Vince~\cite[Theorem~4]{Vince88} and extends to infinite graphs via a standard compactness argument; see~\cite[pp.~372--373]{Zhu01}. Therefore, the circular chromatic number is a refinement of the usual chromatic number. For the graph $\mathcal{O}$, this tells us that $3<\chi_c(\mathcal{O})\leq 4$. A lower bound of $7/2$ appears in~\cite{DeVos+09+} and Ghebleh~\cite[Theorem~4.40]{Ghebleh07} proved $\chi_c(\mathcal{O})\geq 11/3$ and conjectured~\cite[Conjecture~4.41]{Ghebleh07} that $\chi_c(\mathcal{O})=4$; see also~\cite{OPG}. The following theorem confirms this conjecture. It is easily seen to be equivalent to Theorem~\ref{th:spheres}.

\begin{thm}
\label{th:chi_c}
$\chi_c(\mathcal{O})=4$. 
\end{thm}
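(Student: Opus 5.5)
The plan is to derive Theorem~\ref{th:chi_c} from Theorem~\ref{th:spheres}, which we take as given since it is stated earlier. The two inequalities are handled separately.

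\emph{Upper bound.} We have $\chi_c(\mathcal{O})\leq\chi(\mathcal{O})=4$, using the Godsil--Zaks $4$-colouring. Concretely, place the four colours at the points $e^{ik\pi/2}$, $k=0,1,2,3$, of $S^1$. Distinct colours are then at arc distance at least $\pi/2=2\pi/4$, so $r=4$ is feasible in the definition of $\chi_c$.

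\emph{Lower bound.} Suppose for contradiction that $\chi_c(\mathcal{O})<4$.
\begin{enumerate}[(i)]
\item By the definition of $\chi_c$ as an infimum, there is some $r<4$ and a map $f:S^2\to S^1$ such that every orthogonal pair $x,y$ satisfies the following: the shortest arc between $f(x)$ and $f(y)$ has length at least $2\pi/r$.
\item Since $r<4$, we have $2\pi/r>\pi/2$. Set $\varepsilon:=(2\pi/r-\pi/2)/2>0$.
\item Theorem~\ref{th:spheres} gives orthogonal $x,y$ whose images are at arc distance at most $\pi/2+\varepsilon<2\pi/r$.
\item This contradicts (i). Hence $\chi_c(\mathcal{O})\geq 4$.
\end{enumerate}

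Two small technical points need checking. First, the feasible set of $r$ in the definition is upward closed, which is why a witness $r<4$ exists whenever the infimum is below $4$. Second, adjacency in $\mathcal{O}$ is exactly orthogonality on $S^2$, so the vertex set used in $\mathcal{O}$ and in Theorem~\ref{th:spheres} is the same.

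There is no genuine obstacle in this derivation; all the difficulty lies in Theorem~\ref{th:spheres} itself. The converse implication (Theorem~\ref{th:chi_c} implies Theorem~\ref{th:spheres}) runs the same way in reverse. If some $f$ and $\varepsilon$ violated Theorem~\ref{th:spheres}, then $f$ would witness $\chi_c(\mathcal{O})\leq 2\pi/(\pi/2+\varepsilon)<4$.
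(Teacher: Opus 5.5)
Your derivation of the equivalence between Theorem~\ref{th:spheres} and Theorem~\ref{th:chi_c} is correct, and so is the upper bound via the Godsil--Zaks colouring. As a proof of Theorem~\ref{th:chi_c}, however, it is circular. Theorem~\ref{th:spheres} is not an independently established result you may invoke. It is the paper's main theorem, and the paper obtains it only as a consequence of Theorem~\ref{th:chi_c} (``we deduce Theorem~\ref{th:chi_c} (and, therefore, Theorem~\ref{th:spheres}) from the following theorem''). The two statements are equivalent by exactly the argument you give, so assuming one of them amounts to assuming the lower bound $\chi_c(\mathcal{O})\geq 4$ itself. Your remark that ``all the difficulty lies in Theorem~\ref{th:spheres}'' is accurate, and that difficulty is precisely what your proposal leaves out.

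A genuine proof of the lower bound needs the following ingredients.
\begin{enumerate}[(i)]
\item If $\chi_c(\mathcal{O})<4$, Lemma~\ref{lem:finite} gives a $(p,q)$-colouring $f$ with $3q\leq p<4q$; the bound $p\geq 3q$ comes from a triangle.
\item Passing to finitely many colours is what makes Lemma~\ref{lem:nowhereDense} applicable. Hence some colour class, say $f^{-1}(0)$, is somewhere dense. You could instead keep your $S^1$-valued map and partition $S^1$ into finitely many short arcs; some finite discretization is needed either way.
\item The key input is Theorem~\ref{th:nbhd}: $N(f^{-1}(0))$ contains an odd cycle $C$. Every vertex of $C$ receives a colour in $A=\{q,\dots,2q-1\}$ or $B=\{2q,\dots,p-q\}$. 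Each of these is a block of at most $q$ consecutive colours and hence an independent set, so $C$ would be properly $2$-coloured, a contradiction.
\end{enumerate}
The real work is the proof of Theorem~\ref{th:nbhd}. It builds an explicit odd closed walk $b_0,\dots,b_{n-1}$ near the equator, dominated by points $a_0,\dots,a_{n-1}$ in the cap $U$. It then applies the Implicit Function Theorem, after verifying that the Jacobian is invertible, to perturb the walk so that the dominating points can be chosen in the dense set $X$. None of this appears in your proposal.
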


In the next section, we reduce the proof of Theorem~\ref{th:spheres} to the statement that the graph-theoretic neighbourhood of every somewhere dense subset of $S^2$ contains an odd cycle (Theorem~\ref{th:nbhd}). We then prove Theorem~\ref{th:nbhd} in Section~\ref{sec:proof}. The proof uses a construction of odd cycles near the equator of $S^2$, which is reminiscent of Lov\'asz's umbrella construction~\cite{Lovasz79} and subsequent constructions in~\cite{DeCortePikhurko16,CherkashinVoronov24}. The key idea, which is adapted from~\cite{CherkashinVoronov24}, is to use the Implicit Function Theorem to perturb such an odd cycle so that all of its points lie in the neighbourhood of an arbitrary somewhere dense set. 

\section{A Reduction}
\label{sec:prelim}

We start with a standard equivalent definition of the circular chromatic number. Given positive integers $p$ and $q$ with $p\geq q$, define a \emph{$(p,q)$-colouring} of a graph $G$ to be a map $f:V(G)\to \{0,1,\dots,p-1\}$ such that if $uv\in E(G)$, then $q\leq |f(u)-f(v)|\leq p-q$. The following equivalent definition of the circular chromatic number was proven in the case of finite graphs by Zhu~\cite[Theorem~1]{Zhu92}. It is also true for infinite graphs; the argument is essentially the same as the one used to prove~\cite[Lemma~4.4]{deBoerBuysZuiddam24}.

\begin{lem}[See~\cite{Zhu92,deBoerBuysZuiddam24}]
\label{lem:finite}
For any non-empty graph $G$,
\[\chi_c(G)=\inf\left\{p/q: G\text{ has a }(p,q)\text{-colouring}\right\}.\]
\end{lem}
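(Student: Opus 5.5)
We prove the two inequalities separately. Write $R$ for the infimum on the right-hand side. Throughout, identify $S^1$ with $\mathbb{R}/2\pi\mathbb{Z}$ and measure distances by shortest arc length, which we denote $d$.

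\textbf{The inequality $\chi_c(G)\leq R$.} This direction is a direct construction. Suppose $g$ is a $(p,q)$-colouring of $G$. Define $f(v):=2\pi g(v)/p\in S^1$. If $uv\in E(G)$, then $q\leq |g(u)-g(v)|\leq p-q$. The shortest arc between $f(u)$ and $f(v)$ therefore has length
\[\frac{2\pi}{p}\min\{|g(u)-g(v)|,\,p-|g(u)-g(v)|\}\geq \frac{2\pi q}{p}=\frac{2\pi}{p/q}.\]
So $p/q$ is admissible in the definition of $\chi_c$. Taking the infimum over all $(p,q)$-colourings gives $\chi_c(G)\leq R$.

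\textbf{The inequality $R\leq \chi_c(G)$.} Fix any $r$ admissible for $\chi_c(G)$, witnessed by $f:V(G)\to S^1$ with $d(f(u),f(v))\geq 2\pi/r$ on every edge. It suffices to show that $R\leq r+\delta$ for every $\delta>0$.

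Choose rationals $p/q$ slightly larger than $r$, with $q$ large. The idea is to round $f$ to the grid of $p$ equally spaced points on $S^1$. Define
\[g(v):=\lfloor p f(v)/2\pi\rfloor\in\{0,\dots,p-1\},\]
where $f(v)$ is taken in $[0,2\pi)$. Rounding changes each point by less than one grid step of length $2\pi/p$. Hence for an edge $uv$ the circular distance between $g(u)$ and $g(v)$ in $\mathbb{Z}_p$ satisfies
\[\text{circular distance}> \frac{p}{2\pi}\cdot\frac{2\pi}{r}-1=\frac{p}{r}-1.\]
This quantity is at least $q$ provided $p/r-1\geq q$, i.e.\ $p\geq r(q+1)$.

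Choose $q$ large enough that $r(q+1)/q<r+\delta$, and let $p=\lceil r(q+1)\rceil$. Then $g$ is a $(p,q)$-colouring, and
\[\frac{p}{q}\leq \frac{r(q+1)+1}{q}\to r \quad\text{as } q\to\infty.\]
Letting $q\to\infty$ and then taking the infimum over admissible $r$ gives $R\leq\chi_c(G)$.

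\textbf{Checks and possible obstacle.} Note that $p\geq q$ holds automatically since $r\geq1$. The degenerate case where $\chi_c(G)$ is infinite, with no admissible $r$, is trivial for this inequality.

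The only delicate point is the boundary bookkeeping in the rounding step. Floor rounding shifts each point by an amount in $[0,2\pi/p)$. The difference of two such shifts therefore lies in $(-2\pi/p,2\pi/p)$, so the loss is strictly less than one unit of circular distance, which is what the bound above uses. Infinite graphs cause no difficulty: the rounding is defined pointwise, so no compactness argument is needed. The strict inequality is also what ensures the circular distance on an edge lies in $[q,p-q]$ after taking integer parts.
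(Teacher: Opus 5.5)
Your proof is correct and complete. The map $v\mapsto 2\pi g(v)/p$ gives $\chi_c(G)\le R$. Floor-rounding to the grid of $p$ points gives $R\le\chi_c(G)$, because rounding changes circular distance by strictly less than one unit, and since this is pointwise it applies to infinite graphs with no compactness needed.

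The paper does not prove this lemma itself; it defers to Zhu for finite graphs and to the argument of de Boer--Buys--Zuiddam (Lemma~4.4) for infinite graphs. Your rounding argument is a self-contained proof of what is being cited. A minor aside: because the rounding loss is strict, an integer circular distance exceeding $p/r-1\ge q-1$ is already at least $q$. So $p\ge rq$ suffices, and your choice $p=\lceil r(q+1)\rceil$ is more than needed but harmless.
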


As usual, we view the topology of $S^2$ as being generated by open spherical caps. For a topological space $(X,\tau)$, a subset $S$ of $X$ is said to be \emph{nowhere dense} if the interior of its closure is empty and \emph{somewhere dense} otherwise. The next lemma is used to show that, under any $(p,q)$-colouring of $\mathcal{O}$, at least one of the colour classes must be somewhere dense. 

\begin{lem}[See, e.g.,~{\cite[Problem~6.42]{Viro+08}}]
\label{lem:nowhereDense}
Let $(X,\tau)$ be a topological space. A union of finitely many nowhere dense subsets of $X$ is nowhere dense.
\end{lem}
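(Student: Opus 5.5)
The plan is to prove the statement for two nowhere dense sets and then induct on the number of sets. The base case of a single set is trivial. If the statement holds for two sets, then writing $A_1\cup\dots\cup A_k=(A_1\cup\dots\cup A_{k-1})\cup A_k$ completes the induction. So let $A,B\subseteq X$ be nowhere dense; the goal is to show that $A\cup B$ is nowhere dense.

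First, since closure commutes with finite unions, $\overline{A\cup B}=\overline{A}\cup\overline{B}$. Replacing $A$ and $B$ by their closures, it therefore suffices to show the following: if $F$ and $G$ are closed sets with empty interior, then $F\cup G$ has empty interior.

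To prove this, suppose $U$ is an open set with $U\subseteq F\cup G$, and set $V:=U\setminus F=U\cap(X\setminus F)$. Then $V$ is open, as the intersection of two open sets. Moreover $V\subseteq G$, because every point of $U$ lies in $F$ or in $G$. Since $G$ has empty interior, $V=\emptyset$, which means $U\subseteq F$. Since $F$ also has empty interior, $U=\emptyset$. Hence $\operatorname{int}(\overline{A\cup B})=\emptyset$, as required.

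There is no real obstacle here. The only point needing care is to pass to closures before taking interiors. The interior of a union of two sets is not in general the union of their interiors, so the argument relies on the closedness of $F$ to make $U\setminus F$ open.
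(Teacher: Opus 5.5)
Your argument is correct and complete: reducing to two sets, passing to closures via $\overline{A\cup B}=\overline{A}\cup\overline{B}$, and observing that $U\setminus\overline{A}$ is an open subset of $\overline{B}$ is the standard proof. The paper gives no proof of this lemma and only cites it as an exercise from Viro et al., so there is no route in the paper to compare yours against.
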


Next, we deduce Theorem~\ref{th:chi_c} (and, therefore, Theorem~\ref{th:spheres}) from the following theorem. After this, the rest of the paper will be devoted to proving this theorem. Given a vertex $x\in S^2$, we let 
\[N(x):=\left\{y\in S^2: xy\in E(\mathcal{O})\right\}.\]
Also, for a set $X\subseteq S^2$, we let $N(X):=\bigcup_{x\in X} N(x)$ and note that $X$ and $N(X)$ may have elements in common. 

\begin{thm}
\label{th:nbhd}
If $X$ is a somewhere dense subset of $S^2$, then the subgraph of $\mathcal{O}$ induced by $N(X)$ contains an odd cycle. 
\end{thm}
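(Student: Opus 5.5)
We may assume, after a rotation, that $X$ is dense in an open spherical cap $U$ centred at the north pole $p=(0,0,1)$. Indeed, the closure of $X$ has nonempty interior and hence contains an open cap, and rotations are automorphisms of $\mathcal{O}$. The plan is to build a closed walk of odd length $k$ whose vertices $y_1,\dots,y_k$ lie near the equator, with each $y_i$ orthogonal to a chosen point $x_i\in X\cap U$. Then $y_i\in N(X)$ for every $i$, and an odd closed walk in the induced subgraph contains an odd cycle.

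\textbf{Step 1: a base odd cycle near the equator.} Write $y=(\cos a\cos\theta,\cos a\sin\theta,\sin a)$. For two such points with latitudes $a,b$ and longitude difference $\Delta\theta$, orthogonality means $\cos\Delta\theta=-\tan a\tan b$. Consecutive vertices with latitudes of the same sign therefore turn by slightly more than $\pi/2$, and those with opposite signs by slightly less. An odd cycle cannot lie on the equator itself, since there $k\cdot\pi/2\equiv 0 \pmod{2\pi}$ forces $4\mid k$. However, with $k=4m+1$ we need total turning $2\pi m$, that is, an average step of $\pi/2-\pi/(2k)$. This is achievable with small, suitably signed latitudes; for example, take the latitudes mostly alternating in sign, with magnitudes tuned so that the longitudes close up. This gives points $y^0_1,\dots,y^0_k$ of small latitude with $y^0_i\perp y^0_{i+1}$ (indices mod $k$). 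For each $i$, the great circle of points orthogonal to $y^0_i$ passes within a small angle of $p$, so we can choose $x^0_i\in U$ with $x^0_i\perp y^0_i$. By taking $m$ large, the latitudes become as small as we like, so all $x^0_i$ lie in $U$.

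\textbf{Step 2: perturbation via the Implicit Function Theorem.} This follows the idea adapted from~\cite{CherkashinVoronov24}. For $x=(x_1,\dots,x_k)$ near $x^0$, parametrize each $y_i$ by an angle $t_i$ along the great circle $x_i^\perp$, smoothly in $x_i$, with $t_i=0$ at the base point. The closing conditions become $k$ equations $g_i(x;t_i,t_{i+1})=y_i\cdot y_{i+1}=0$ in the $k$ unknowns $t_1,\dots,t_k$. The Jacobian in $t$ is cyclic bidiagonal, with entries $\alpha_i=\partial g_i/\partial t_i$ and $\beta_i=\partial g_i/\partial t_{i+1}$. Its determinant is $\prod\alpha_i-(-1)^k\prod\beta_i=\prod\alpha_i+\prod\beta_i$, since $k$ is odd. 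If this is nonzero at $(x^0,0)$, the Implicit Function Theorem gives open neighbourhoods $V_i\ni x^0_i$ inside $U$ and a continuous solution $t(x)$ on $V_1\times\dots\times V_k$. Since $X$ is dense in $U$, we may pick $x_i\in X\cap V_i$. The resulting $y_i(x)$ then form an odd closed walk in $\mathcal{O}$ with every vertex in $N(X)$. Consecutive vertices are automatically distinct, because they are orthogonal.

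\textbf{Main obstacle.} The key difficulty is the nondegeneracy condition $\prod\alpha_i+\prod\beta_i\neq 0$ at the base configuration. The hope is that the choice of $x^0_i$ on the great circle $(y^0_i)^\perp$ gives enough freedom to arrange it. Moving $x^0_i$ changes the direction in which $y_i$ can slide, and hence changes $\alpha_i$ and $\beta_{i-1}$. Each of these is essentially the inner product of $y_{i\pm1}$ with the unit tangent to $x_i^\perp$ at $y_i$. I would first compute $\alpha_i$ and $\beta_i$ explicitly in terms of the tangent directions. I would then argue that the quantity $\prod\alpha_i+\prod\beta_i$ is a nonconstant real-analytic function of the choices of $x^0_i$, so it is nonzero for a generic choice. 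If needed, I would take all $x^0_i$ to be the pole of a great circle through $y^0_i$ tilted by a common small parameter, and check the sign directly.
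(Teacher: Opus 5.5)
Your strategy matches the paper's. You build an odd closed walk near the equator whose vertices are dominated by points near the pole. You then apply the Implicit Function Theorem to a system whose Jacobian is cyclic bidiagonal with determinant $\prod\alpha_i+\prod\beta_i$. The paper keeps the domination conditions as extra equations, which contribute an identity block, rather than parametrizing along great circles; the two are equivalent.

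The gap is the step you flag yourself: you never show this determinant is nonzero, and it is not a genericity formality. Let $\lambda_i$ be the latitude of $y_i^0$ and $\Delta_i=\theta_{i+1}-\theta_i$ the longitude steps. Suppose each $x_i^0$ is the natural choice, the point of $(y_i^0)^\perp$ nearest the pole. Then the tangent $\tau_i$ is horizontal, and $\alpha_i=\cos\lambda_{i+1}\sin\Delta_i$, $\beta_i=-\cos\lambda_i\sin\Delta_i$. Hence $\prod\alpha_i+\prod\beta_i=P-P=0$ with $P=\prod_i\cos\lambda_i\sin\Delta_i$, for \emph{every} base configuration and every odd $k$. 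The reason is that infinitesimal rotation about the $z$-axis moves each $y_i^0$ along $x_i^\perp$ and preserves all inner products, so $t_i=\cos\lambda_i$ gives a kernel vector. So your ``nonconstant real-analytic'' claim is exactly what must be proved. Your common-tilt fallback is also unchecked: in your mostly alternating configuration, a tilt in a common sense makes indices of positive and negative latitude contribute to the first-order change with opposite signs, so they may cancel.

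The missing idea is an explicit symmetry-breaking computation. Rotate a single $x_j^0$ by an angle $\phi$ inside $(y_j^0)^\perp$ and keep the others natural. Only $\alpha_j$ and $\beta_{j-1}$ change, and both are linear in $(\cos\phi,\sin\phi)$, giving
\[\prod_i\alpha_i+\prod_i\beta_i=-\frac{P\sin\phi}{\cos\lambda_j}\left(\frac{\tan\lambda_{j+1}}{\sin\Delta_j}+\frac{\tan\lambda_{j-1}}{\sin\Delta_{j-1}}\right).\]
This is nonzero for small $\phi\neq0$ whenever $\lambda_{j-1}$ and $\lambda_{j+1}$ are nonzero of the same sign, and the tilted $x_j^0$ still lies in $U$.

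This is precisely the paper's device. It uses an explicit constant-latitude walk: third coordinate of $v_i$ equal to $\sqrt{s}$, $n\equiv 3\pmod 4$, longitude step $\pi/2+\pi/(2n)$. That also replaces the ``tuning'' in your Step 1, which you never actually construct. It then tilts only $a_0$, via $u_0=(-1,1,1/\sqrt{s})$, and obtains $\det N=-2s(s+1)c^{n-1}\neq0$. With the symmetric choice of $u_0$, the matrix $M$ would be singular.
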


\begin{proof}[Proof of Theorem~\ref{th:chi_c}]
Suppose, to the contrary, that $\chi_c(\mathcal{O})<4$. Then, by Lemma~\ref{lem:finite}, there exist positive integers $p$ and $q$ such that $q\leq p<4q$ and $\mathcal{O}$ has a $(p,q)$-colouring, say $f:S^2\to \{0,\dots,p-1\}$. Note that $p\geq3q$ because $\mathcal{O}$ contains a triangle; e.g. on the vectors $(1,0,0),(0,1,0)$ and $(0,0,1)$. 

By Lemma~\ref{lem:nowhereDense}, there must exist $c\in\{0,\dots,p-1\}$ such that $f^{-1}(c)$ is somewhere dense. Without loss of generality $c=0$. By Theorem~\ref{th:nbhd}, there is an odd cycle $C$ in $\mathcal{O}$ in which all of the vertices are in $N(f^{-1}(0))$. By definition of a  $(p,q)$-colouring, we must have that all vertices of $C$ are coloured from $A:=\{q,\dots,2q-1\}$ or $B:=\{2q,\dots, p-q\}$. Note that $A$ consists of $q$ consecutive colours, and $B$ consists of $p-3q+1$ consecutive colours; since $3q\leq p\leq 4q-1$, we have $1\leq|B|\leq q$. Therefore, no two adjacent vertices can both be coloured from $A$ and no two adjacent vertices can both be coloured from $B$. However, this contradicts the fact that $C$ does not have a proper $2$-colouring and completes the proof. 
\end{proof}

\section{Neighbourhoods of Somewhere Dense Sets}
\label{sec:proof}

The proof of Theorem~\ref{th:nbhd} applies the Implicit Function Theorem in the following form; a similar application of the Implicit Function Theorem can be found in~\cite{CherkashinVoronov24}.

\begin{thm}[See, e.g.,~{\cite[Theorem~9.28]{Rudin76}}]
\label{th:IFT}
Let $F:\mathbb{R}^{p+q}\to \mathbb{R}^q$ be a continuously differentiable function. We view $F(x,y)$ as a function of variables $x=(x_1,\dots,x_p)\in\mathbb{R}^p$ and $y=(y_1,\dots,y_q)\in \mathbb{R}^q$ and write $F(x,y)=(F_1(x,y),\dots,F_q(x,y))$ where $F_i:\mathbb{R}^{p+q}\to\mathbb{R}$ for all $1\leq i\leq q$. Suppose that $a\in\mathbb{R}^p$ and $b\in \mathbb{R}^q$ satisfy $F(a,b)=0\in \mathbb{R}^q$. If the \emph{Jacobian matrix}
\[
D_yF(a,b):=
\begin{bmatrix}
\frac{\partial F_1}{\partial y_1}(a,b)
& \cdots &
\frac{\partial F_1}{\partial y_q}(a,b)
\\
\vdots & \ddots & \vdots
\\
\frac{\partial F_q}{\partial y_1}(a,b)
& \cdots &
\frac{\partial F_q}{\partial y_q}(a,b)
\end{bmatrix}
\]
is invertible, then there exists an open set $W\subseteq \mathbb{R}^p$ with $a\in W$ and a continuously differentiable function $g:W\to \mathbb{R}^q$ such that $g(a)=b$ and $F(x,g(x))=0$ for all $x\in W$. 
\end{thm}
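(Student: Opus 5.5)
This is the standard Implicit Function Theorem, and I would prove it by reducing it to the Inverse Function Theorem, which I would in turn prove with the contraction mapping principle.

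\textbf{Reduction to the inverse function theorem.} Define $G:\mathbb{R}^{p+q}\to\mathbb{R}^{p+q}$ by $G(x,y):=(x,F(x,y))$. Then $G$ is continuously differentiable and $G(a,b)=(a,0)$. Its Jacobian at $(a,b)$ is the block lower-triangular matrix
\[
DG(a,b)=\begin{bmatrix} I_p & 0\\ D_xF(a,b) & D_yF(a,b)\end{bmatrix},
\]
whose determinant is $\det D_yF(a,b)\neq 0$. So $DG(a,b)$ is invertible. The inverse function theorem then gives open sets $U\ni(a,b)$ and $V\ni(a,0)$ such that $G:U\to V$ is a bijection with continuously differentiable inverse $H:V\to U$. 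Set $W:=\{x\in\mathbb{R}^p:(x,0)\in V\}$. This set is open, being the preimage of $V$ under the continuous map $x\mapsto(x,0)$, and it contains $a$. Define $g(x)$ to be the last $q$ coordinates of $H(x,0)$. Since $G$ preserves the first $p$ coordinates, $H(x,0)=(x,g(x))$, and so $(x,F(x,g(x)))=G(H(x,0))=(x,0)$, that is, $F(x,g(x))=0$ on $W$. Also $H(a,0)=(a,b)$ gives $g(a)=b$, and $g$ is $C^1$ as a composition of $C^1$ maps.

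\textbf{Proof of the inverse function theorem.} Let $A:=DG(a,b)$, which is invertible, and fix $\lambda$ with $2\lambda\|A^{-1}\|=1$. By continuity of $DG$, choose an open ball $U$ around $(a,b)$ on which $\|DG(z)-A\|<\lambda$. For a target $w$, consider $\varphi_w(z):=z+A^{-1}(w-G(z))$. On $U$ we have $\|D\varphi_w\|\le 1/2$, so $\varphi_w$ is a contraction there. Fixed points of $\varphi_w$ are exactly the solutions of $G(z)=w$, so this yields:
\begin{enumerate}[(i)]
\item injectivity of $G$ on $U$, since $\varphi_w$ has at most one fixed point there;
\item openness of $V:=G(U)$: for $w$ close to some $G(z_0)$, the map $\varphi_w$ sends a small closed ball around $z_0$ into itself, and Banach's fixed point theorem provides a preimage;
\item the bound $\|H(w)-H(w')\|\le 2\|A^{-1}\|\,\|w-w'\|$, which follows from $\|z-z'\|\le 2\|A^{-1}\|\,\|G(z)-G(z')\|$.
\end{enumerate}

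\textbf{Main obstacle: differentiability of $H$.} I expect the most delicate step to be showing that $H$ is differentiable with $DH(w)=DG(H(w))^{-1}$. First, $DG(z)$ is invertible on $U$, because $\|A^{-1}(DG(z)-A)\|<1/2$. Then I would write the remainder
\[
H(w+k)-H(w)-DG(z)^{-1}k=-DG(z)^{-1}\bigl(G(z+h)-G(z)-DG(z)h\bigr),
\]
where $z=H(w)$ and $h=H(w+k)-H(w)$. The Lipschitz bound (iii) gives $\|h\|=O(\|k\|)$, so the differentiability of $G$ at $z$ makes the right-hand side $o(\|k\|)$. Finally, $DH=(DG\circ H)^{-1}$ is continuous, since matrix inversion is continuous and $H$ and $DG$ are continuous. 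Hence $H$ is $C^1$, which completes the argument.
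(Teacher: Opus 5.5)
Your proof is correct and takes essentially the same route as the source the paper cites; the paper gives no proof of its own and quotes Rudin's Theorem~9.28. Rudin derives that theorem by applying the inverse function theorem to the same auxiliary map as your $G$ (up to renaming variables), and proves the inverse function theorem with the contraction $\varphi_w(z)=z+A^{-1}(w-G(z))$ under the same normalization $2\lambda\|A^{-1}\|=1$, so your reduction and your remainder estimate for $DH$ match that standard argument.
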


We now present the proof of Theorem~\ref{th:nbhd}.

\begin{proof}[Proof of Theorem~\ref{th:nbhd}]
Let $X\subseteq S^2$ be somewhere dense and let $U$ be an open spherical cap contained in the interior of the closure of $X$. Without loss of generality, we may assume that $U$ is centred on the vector $(0,0,1)$. That is, we can assume that 
\[U=\{(u_1,u_2,u_3)\in S^2: u_3>1-\delta\}\]
for some $\delta>0$. Also, by definition of $U$, we have that $X$ is dense in $U$. 

Let $k$ be a positive integer, which will be chosen large with respect to $\delta$, and let $n=4k+3$. In particular, $n$ is odd. The idea of the proof is as follows. The first step is to select $n$ points $a_0,\dots,a_{n-1}$ in $U$ and points $b_0,\dots,b_{n-1}$ near the equator such that $b_i$ is orthogonal to $a_i$ and $b_{i+1}$ for all $0\leq i\leq n-1$, where indices are viewed modulo $n$. That is, $b_0,\dots,b_{n-1}$ form an odd closed walk that is dominated by $a_0,\dots,a_{n-1}$. After that, the idea is to show that this choice is sufficiently flexible that we can vary the points $a_0,\dots,a_{n-1}$ freely within an open set and, for every such choice, say $x_0,\dots,x_{n-1}$, we can find a perturbation $w_0,\dots,w_{n-1}$ of $b_0,\dots,b_{n-1}$ which forms an odd closed walk and is dominated by $x_0,\dots,x_{n-1}$. The Implicit Function Theorem (Theorem~\ref{th:IFT}) will allow us to do this. Since $X$ is dense in $U$, we can choose $x_0,\dots,x_{n-1}$ so that they are contained in $X$, which will complete the proof. 

Let us now give the details. We start by constructing $a_0,\dots,a_{n-1},b_0,\dots,b_{n-1}$; see Figure~\ref{fig:ab} for a diagram. For $0\leq i\leq n$, define
\[v_i:=\left(\cos\left(\frac{i(n+1)\pi}{2n}\right),\sin\left(\frac{i(n+1)\pi}{2n}\right),\sqrt{\sin\left(\frac{\pi}{2n}\right)}\right).\]
Note that $v_n=v_0$ because $n=4k+3$ and so $n+1$ is a multiple of four. Also, for all $0\leq i\leq n-1$, we have
\begin{equation}
\label{eq:vnorm}
\|v_i\|_2=\sqrt{\cos^2\left(\frac{i(n+1)\pi}{2n}\right)+\sin^2\left(\frac{i(n+1)\pi}{2n}\right)+\sin\left(\frac{\pi}{2n}\right)}=\sqrt{1+\sin\left(\frac{\pi}{2n}\right)}.\end{equation}
We let $b_i=\left(1+\sin\left(\frac{\pi}{2n}\right)\right)^{-1/2}\cdot v_i$ so that $b_i\in S^2$. Note that, if $n$ is large, then the third coordinate of $b_i$ is close to zero. Thus, the vectors $b_0,\dots,b_{n-1}$ are close to the equator. 

Now, let us show that $b_0,\dots,b_{n-1}$ form a closed walk in $\mathcal{O}$. For $0\leq i\leq n-1$, we have 
\begin{align*}
\langle v_i,v_{i+1}\rangle =& \cos\left(\frac{i(n+1)\pi}{2n}\right)\cos\left(\frac{(i+1)(n+1)\pi}{2n}\right)\\
&+ \sin\left(\frac{i(n+1)\pi}{2n}\right)\sin\left(\frac{(i+1)(n+1)\pi}{2n}\right) + \sin\left(\frac{\pi}{2n}\right).
\end{align*}
Using the identity $\cos(A-B)=\cos(A)\cos(B) + \sin(A)\sin(B)$, the expression on the right side becomes
\[\cos\left(\frac{(n+1)\pi}{2n}\right) + \sin\left(\frac{\pi}{2n}\right)=\cos\left(\frac{\pi}{2}+\frac{\pi}{2n}\right)+ \sin\left(\frac{\pi}{2n}\right) = -\sin\left(\frac{\pi}{2n}\right)+\sin\left(\frac{\pi}{2n}\right)=0.\]
Therefore, they are orthogonal. Recall that $v_n=v_0$. Thus, $b_0,\dots,b_{n-1}$ form an odd closed walk in $\mathcal{O}$. 

Next, define
\[u_0:=\left(-1,1,\frac{1}{\sqrt{\sin\left(\frac{\pi}{2n}\right)}}\right)\]
and, for $1\leq i\leq n-1$, let
\[u_i:=\left(-\cos\left(\frac{i(n+1)\pi}{2n}\right),-\sin\left(\frac{i(n+1)\pi}{2n}\right),\frac{1}{\sqrt{\sin\left(\frac{\pi}{2n}\right)}}\right).\]
We choose $u_0$ differently from $u_1,\dots,u_{n-1}$ for technical reasons; in particular, it will be helpful for showing that the Jacobian is invertible in our application of the Implicit Function Theorem. Note that
\[\|u_0\|_2=\sqrt{2+\frac{1}{\sin\left(\frac{\pi}{2n}\right)}}\]
and, for $1\leq i\leq n-1$,
\[\|u_i\|_2 = \sqrt{1+\frac{1}{\sin\left(\frac{\pi}{2n}\right)}}.\]
Define $a_i:=\|u_i\|_2^{-1}\cdot u_i$ for all $0\leq i\leq n-1$. We choose $n$ large enough so that 
\[\frac{1}{\sqrt{2+\frac{1}{\sin\left(\frac{\pi}{2n}\right)}}\cdot \sqrt{\sin\left(\frac{\pi}{2n}\right)}}=\frac{1}{\sqrt{2\sin\left(\frac{\pi}{2n}\right) + 1}}>1-\delta\]
which ensures that all of $a_0,a_1,\dots, a_{n-1}$ are in $U$. We observe that
\[\langle u_0,v_0\rangle = \left\langle\left(-1,1,\frac{1}{\sqrt{\sin\left(\frac{\pi}{2n}\right)}}\right), \left(1,0,\sqrt{\sin\left(\frac{\pi}{2n}\right)}\right)\right\rangle=0.\]
Also, for $1\leq i\leq n-1$, using the identity $\cos^2\theta + \sin^2\theta=1$, we get that $\langle u_i,v_i\rangle=0$. Thus, $a_i$ is orthogonal to $b_i$ for all $0\leq i\leq n-1$.

The goal in the rest of the proof is to use the Implicit Function Theorem to show that we can perturb the points $a_0,\dots,a_{n-1}$ within an open set so that, for every such perturbation, we can find an odd closed walk in the neighbourhood. First, for $0\leq i\leq n-1$, denote the coordinates of $a_i$ by $a_i=(a_{i,1},a_{i,2},a_{i,3})$ and define
\[a:=\left(a_{0,1},a_{0,2},a_{0,3}, a_{1,1},a_{1,2},a_{1,3},\dots,a_{n-1,1},a_{n-1,2},a_{n-1,3}\right).\]
We let 
\[x=(x_{0,1},x_{0,2},x_{0,3}, x_{1,1},x_{1,2},x_{1,3},\dots,x_{n-1,1},x_{n-1,2},x_{n-1,3})\in \mathbb{R}^{3n}\]
be a vector of $3n$ real variables where, for $0\leq i\leq n-1$, we let $x_i=(x_{i,1},x_{i,2},x_{i,3})$. Next, define
\[h_0:= \left(\sin\left(\frac{\pi}{2n}\right), 1+\sin\left(\frac{\pi}{2n}\right),-\sqrt{\sin\left(\frac{\pi}{2n}\right)}\right)\]
and, for $1\leq i\leq n-1$, define
\[h_i:= \left(-\sin\left(\frac{i(n+1)\pi}{2n}\right), \cos\left(\frac{i(n+1)\pi}{2n}\right),0\right).\]
We observe that $h_i$ is orthogonal to both $a_i$ and $b_i$ for all $0\leq i\leq n-1$. Let 
\[y=\left(y_{0,1},y_{1,1},\dots,y_{n-1,1},y_{0,2},y_{1,2},\dots,y_{n-1,2}\right)\in \mathbb{R}^{2n}\]
be a vector of $2n$ real variables. For every choice of $y\in \mathbb{R}^{2n}$ and $0\leq i\leq n-1$, let 
\[z_i(y) = b_i + y_{i,1}a_i + y_{i,2}h_i.\]
We define a function $F:\mathbb{R}^{3n+2n}\to\mathbb{R}^{2n}$ by 
\[F(x,y):=(A_0(x,y),\dots, A_{n-1}(x,y),B_0(x,y),\dots,B_{n-1}(x,y)),\] where $x\in \mathbb{R}^{3n}$ and $y\in \mathbb{R}^{2n}$. For $0\leq i\leq n-1$, define
\[A_i(x,y):=\left\langle x_i,z_i(y) \right\rangle,\]
\[B_i(x,y):=\left\langle z_i(y),z_{i+1}(y) \right\rangle\]
where the indices are viewed modulo $n$. By construction, each coordinate of $F$ is a polynomial and, therefore, $F$ is continuously differentiable. Also, $F(a,0)=0$, where $0$ denotes the zero vector of $\mathbb{R}^{2n}$. 

Let us now compute the entries of the relevant Jacobian matrix. For any $x$ and $y$ and $0\leq i,j\leq n-1$, we have
\[\frac{\partial A_i}{\partial y_{j,1}}(x,y) = \begin{cases}\langle x_i,a_i\rangle &\text{if }j=i,\\0&\text{otherwise},\end{cases} \quad\quad\quad\frac{\partial A_i}{\partial y_{j,2}}(x,y) = \begin{cases}\langle x_i,h_i\rangle &\text{if }j=i,\\0&\text{otherwise},\end{cases}\]
\[\frac{\partial B_i}{\partial y_{j,1}}(x,y) = \begin{cases}\langle a_i,z_{i+1}(y)\rangle &\text{if }j=i,\\ \langle z_i(y),a_{i+1}\rangle&\text{if }j=i+1,\\0&\text{otherwise},\end{cases} \quad\quad\quad\frac{\partial B_i}{\partial y_{j,2}}(x,y) = \begin{cases}\langle h_i,z_{i+1}(y)\rangle &\text{if }j=i,\\\langle z_i(y),h_{i+1}\rangle & \text{if }j=i+1,\\0&\text{otherwise}.\end{cases}\]
So, if we set $x=a$ and $y=0$, where $0$ is the zero vector in $\mathbb{R}^{2n}$, we get 
\[\frac{\partial A_i}{\partial y_{j,1}}(a,0) = \begin{cases}1 &\text{if }j=i,\\0&\text{otherwise},\end{cases} \quad\quad\quad\frac{\partial A_i}{\partial y_{j,2}}(a,0) = 0\]
\[
\frac{\partial B_i}{\partial y_{j,1}}(a,0) = \begin{cases}\langle a_i,b_{i+1}\rangle &\text{if }j=i,\\ \langle b_i,a_{i+1}\rangle&\text{if }j=i+1,\\0&\text{otherwise},\end{cases} \quad\quad\quad\frac{\partial B_i}{\partial y_{j,2}}(a,0) = \begin{cases}\langle h_i,b_{i+1}\rangle &\text{if }j=i,\\\langle b_i,h_{i+1}\rangle & \text{if }j=i+1,\\0&\text{otherwise}.\end{cases}\]
Thus, the Jacobian has the form
\[D_yF(a,0) = \begin{bmatrix}I & 0\\ L & M\end{bmatrix}\]
where $I$ is the $n\times n$ identity matrix, $0$ is the $n\times n$ all-zero matrix, and both $L$ and $M$ are $n\times n$ matrices. Therefore, $D_yF(a,0)$ is invertible if and only if $M$ is. We have
\[M_{i,j}=\frac{\partial B_i}{\partial y_{j,2}}(a,0)=\begin{cases}\langle h_i,b_{i+1}\rangle &\text{if }j=i,\\\langle b_i,h_{i+1}\rangle & \text{if }j=i+1,\\0&\text{otherwise}.\end{cases}\]
Let $s:=\sin\left(\frac{\pi}{2n}\right)$ and $c:=\cos\left(\frac{\pi}{2n}\right)$. By \eqref{eq:vnorm} and the definitions of $h_i$, $b_i$ and $v_i$, we have
\[\sqrt{1+s}\cdot M = \begin{bmatrix}
\left(1+s\right)\left(c-s\right) & -c & 0 & 0& \cdots & 0\\
0 & c & -c & 0&\cdots & 0\\
0 & 0 & c & -c&\cdots & 0\\
\vdots & \vdots&  &\ddots & & \vdots\\
0 &0&0 &\cdots&c&-c\\ 
-\left(1+s\right)\left(c+s\right)&0&0&\cdots & 0& c
\end{bmatrix}.\]
Now, let $N$ be the matrix obtained from $\sqrt{1+s}\cdot M$ by replacing the first row with the sum of all of the rows. Then $M$ is invertible if and only if $N$ is invertible. Using $\left(1+s\right)\left(c-s\right)-\left(1+s\right)\left(c+s\right)=-2s(s+1)$ and the cancellation between $c$ and $-c$, we have 
\[N= \begin{bmatrix}
-2s(s+1) & 0 & 0 & 0& \cdots & 0\\
0 & c & -c & 0&\cdots & 0\\
0 & 0 & c & -c&\cdots & 0\\
\vdots & \vdots&  &\ddots & & \vdots\\
0 &0&0 &\cdots&c&-c\\ 
-\left(1+s\right)\left(c+s\right)&0&0&\cdots & 0& c
\end{bmatrix}.\]
We compute the determinant of $N$ by expanding along the first row. The matrix obtained from $N$ by deleting the first row and column is upper triangular with all diagonal entries equal to $c$; thus, it has determinant $c^{n-1}$. Therefore, $\det(N)= -2s(s+1)c^{n-1}\neq 0$. Thus, $D_yF(a,0)$ is invertible. 

So, by the Implicit Function Theorem, there exists an open set $W\subseteq \mathbb{R}^{3n}$ with $a\in W$ and a continuously differentiable function $g:W\to\mathbb{R}^{2n}$ such that $g(a)=0$ and $F(x,g(x))=0$ for all $x\in W$. Since $a_0,\dots,a_{n-1}\in U$ and $X$ is dense in $U$, we may choose $x\in W$ such that $x_0,\dots,x_{n-1}\in X\cap U$. Let $y=g(x)$ and consider the vectors $z_0(y),\dots,z_{n-1}(y)$. Since $\langle b_i,z_i(y)\rangle=1$ by construction of $z_i(y)$, we have that $z_i(y)$ is not the zero vector. So, we can define $w_i:=\|z_i(y)\|_2^{-1}z_i(y)$ for $0\leq i\leq n-1$. Thus, $w_0,\dots,w_{n-1}$ form an odd closed walk in $\mathcal{O}$ where $x_i$ is adjacent to $w_i$ for all $0\leq i\leq n-1$. Every odd closed walk contains an odd cycle, and so the proof is complete.
\end{proof}

\begin{figure}[htbp]
    \centering
    \includegraphics[width=0.75\linewidth]{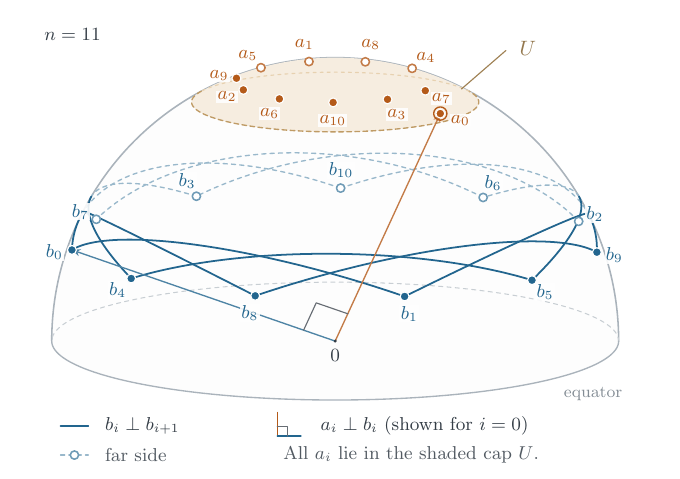}
    \caption{A depiction of the construction of $a_0,\dots,a_{n-1}$ and $b_0,\dots,b_{n-1}$ in the case $n=11$.}
\label{fig:ab}
\end{figure}

\begin{ai}
I first encountered the problem of determining the circular chromatic number of the real orthogonality graph on Open Problem Garden~\cite{OPG} in 2009 when I was an undergraduate student. I then learned more from reading~\cite{Ghebleh07} and~\cite{DeVos+09+}; I thank Luis Goddyn for sharing a draft of~\cite{DeVos+09+} with me. 

On September 11, 2026, I asked ChatGPT 6 Astra Ultra to prove~\cite[Conjecture~4.41]{Ghebleh07} and provided no additional context. In its first attempt, it was able to improve the known lower bound on $\chi_c(\mathcal{O})$ from $11/3$ to $23/6$ by combining a computation on the subgraph induced by normalizations of non-zero integer-valued vectors with entries between $-3$ and $3$ with a structural argument about the full orthogonality graph. The idea of using finite subgraphs of this type is not new. In an unpublished manuscript, DeVos, Ghebleh, Goddyn, Mohar and Naserasr~\cite{DeVos+09+} proved a lower bound of $7/2$ using the analogous subgraph for entries between $-1$ and $1$, and Ghebleh~\cite{Ghebleh07} proved a lower bound of $11/3$ by considering entries between $-2$ and $2$. 

In a second prompt in the same conversation, I provided ChatGPT 6 Astra Ultra with a picture of a $5$-cycle gadget that I had identified back in 2010, with a description of how I wanted to use it to restrict colour choices, and asked whether a generalization of it could be helpful. It then used a generalization of the gadget to produce a full proof of the conjecture, which is essentially the same as the proof that I have included here. I modified the presentation and rewrote it from scratch. ChatGPT assisted during the writing by proofreading, suggesting references, and answering questions about the arguments. Also, ChatGPT generated the diagram in Figure~\ref{fig:ab}. I take full responsibility for the contents of this paper and correctness of its arguments. 
\end{ai}


\begin{thebibliography}{10}

\bibitem{CherkashinVoronov24}
D.~Cherkashin and V.~Voronov.
\newblock On the chromatic number of 2-dimensional spheres.
\newblock {\em Discrete Comput. Geom.}, 71(2):467--479, 2024.

\bibitem{deBoerBuysZuiddam24}
D.~de~Boer, P.~Buys, and J.~Zuiddam.
\newblock The asymptotic spectrum distance, graph limits, and the {Shannon}
  capacity.
\newblock arXiv:2404.16763, 2024.

\bibitem{DeCortePikhurko16}
E.~DeCorte and O.~Pikhurko.
\newblock Spherical sets avoiding a prescribed set of angles.
\newblock {\em Int. Math. Res. Not. IMRN}, (20):6095--6117, 2016.

\bibitem{OPG}
M.~DeVos.
\newblock Circular colouring the orthogonality graph.
\newblock Open Problem Garden, 2008.
\newblock \url{https://www.openproblemgarden.org/op/circular_colouring_the_orthogonality_graph}.

\bibitem{DeVos+09+}
M.~DeVos, M.~Ghebleh, L.~Goddyn, B.~Mohar, and R.~Naserasr.
\newblock Colouring the real orthogonality graph.
\newblock Unpublished manuscript, 2009.

\bibitem{Ghebleh07}
M.~Ghebleh.
\newblock {\em Theorems and computations in circular colourings of graphs}.
\newblock PhD thesis, Simon Fraser University, Burnaby, British Columbia, Canada,
  2007.

\bibitem{GodsilZaks88}
C.~D. Godsil and J.~Zaks.
\newblock Colouring the sphere.
\newblock Research Report CORR 88-12, University of Waterloo, 1988.
\newblock See arXiv:1201.0486.

\bibitem{KochenSpecker67}
S.~Kochen and E.~P. Specker.
\newblock The problem of hidden variables in quantum mechanics.
\newblock {\em Journal of Mathematics and Mechanics}, 17(1):59--87, 1967.

\bibitem{Lovasz79}
L.~Lov\'asz.
\newblock On the {S}hannon capacity of a graph.
\newblock {\em IEEE Trans. Inform. Theory}, 25(1):1--7, 1979.

\bibitem{Rudin76}
W.~Rudin.
\newblock {\em Principles of Mathematical Analysis}.
\newblock McGraw-Hill, New York, third edition, 1976.

\bibitem{Vince88}
A.~Vince.
\newblock Star chromatic number.
\newblock {\em J. Graph Theory}, 12(4):551--559, 1988.

\bibitem{Viro+08}
O.~Ya. Viro, O.~A. Ivanov, N.~Yu. Netsvetaev, and V.~M. Kharlamov.
\newblock {\em Elementary topology}.
\newblock American Mathematical Society, Providence, RI, 2008.
\newblock Problem textbook.

\bibitem{Zhu92}
X.~Zhu.
\newblock Star chromatic numbers and products of graphs.
\newblock {\em J. Graph Theory}, 16(6):557--569, 1992.

\bibitem{Zhu01}
X.~Zhu.
\newblock Circular chromatic number: a survey.
\newblock {\em Discrete Math.}, 229(1--3):371--410, 2001.

\end{thebibliography}
\end{document}